\newcommand{\abs}[1]{\left\vert#1\right\vert}
\newcommand{\mathsym}[1]{{}}
\newtheorem{theorem}{Theorem}
\theoremstyle{plain}
\newtheorem{example}{Example}
\newtheorem{lemma}{Lemma}
\newtheorem{remark}{Remark}
\numberwithin{equation}{section}
\newcommand{\R}{\mathbb{R}}
\begin{document}
\title{Optimal transport and Tessellation}
\author{Martin Huesmann}

\begin{abstract}
Optimal transport from the volume measure to a convex combination of Dirac measures yields a tessellation of a Riemannian manifold into pieces of arbitrary relative size. This tessellation is studied for the cost functions $c_p(z,y)=\frac{1}{p}d^p(z,y)$ and $1\leq p<\infty$. Geometric descriptions of the tessellations for all $p$ is obtained for compact subsets of the Euclidean space. For $p=2$ this approach yields Laguerre tessellations. For $p=1$ it induces Johnson Mehl diagrams for all compact Riemannian manifolds.
\end{abstract}

\keywords{Optimal transportation, Laguerre tessellation}
\maketitle

\section{Introduction}
Given two probability measures $\mu$ and $\nu$ on some Riemannian manifold $M$, a transportation map from $\mu$ to $\nu$ is a Borel measurable map $\tilde T:M\to M$ pushing $\mu$ forward to $\nu$, i.e. $\tilde T\#\mu=\nu$. In the theory of optimal transportation one is interested in a special transportation map, namely the one minimising
\[
\int_M c(x,\tilde T(x)) d\mu(x)
\]
the transportation cost among all possible transportation maps, for a given cost function $c:M\times M\to\mathbb{R}$. This minimiser, if it exists, is called optimal transportation map. \\

A Voronoi tessellation of a Riemannian manifold $M$ with respect to a discrete set of points $S$ is a decomposition of $M$ into different cells with centre $s\in S$. Each cell consists of those points which are closer to its centre than to any other centre. A rather natural extension are the Laguerre tessellations where different centres can have different weights, see \cite{lautensack2007}.\\

The theory of optimal transport is a very natural setting for studying such tessellations. Just take $\mu$ as the normalised volume measure, $\nu$ as a convex combination of Dirac-measures and as the cost function $c(x,y)=\frac{1}{2}d^2(x,y)$, the geodesic distance squared. The resulting transportation map is a Laguerre tessellation. In this framework the generalisation of Laguerre tessellations to other cost functions like $c(x,y)=\frac{1}{p} d^p(x,y)$ is straightforward (at least to write the tessellation down).\\

Some of the phenomena studied in this paper already appeared in the work of Ma, Trudinger and Wang, Loeper, Kim and McCann (see \cite{ma2005,loeper2009regularity,kim-2007}), where they studied the regularity of the optimal transport map. In the case above for $p=2$ and the manifold $M$ being the hyperbolic disc, the resulting "cells" do not have to be connected any more. If one smears the Dirac points slightly one ends up with a measure which is absolutely continuous to the volume measure (the density can even be very nice) but with a discontinuous transportation map! Therefore, even the transport between two measures being absolutely continuous to the volume measure can be rather irregular. However, in the case $p=2$ the Ma-Trudinger-Wang condition is known to hold on the sphere. In particular, this implies that all cells have to be connected. For $p\neq 2$ the condition is not verified. 

The Euclidean case, section \ref{Euclidean case}, was already mentioned in the pioneering work of Gangbo and McCann \cite{GangboMcCann1996} and  R\"uschendorf and Uckelmann \cite{RueUckelmann1997}. They considered this setting as an example for an explicit solution to the transportation problem (example 1.6 in \cite{GangboMcCann1996} and section 1.2 in \cite{RueUckelmann1997}). In this work we are more interested in the geometric and topological properties of the resulting tessellations.

\subsection{General setting}

In \cite{mccann01polar} McCann showed that on a compact connected complete Riemannian manifold $(M,g)$ without boundary there is always an optimal transport map $T$ (defined a.e.) pushing some measure $\mu$, absolutely continuous to the volume measure $vol$, forward to an arbitrary Borel measure $\nu$ on $M$ under the condition that the cost function $c$ is superdifferentiable. In the case $c(x,y)=\frac{1}{2}d^2(x,y)$ the map $T$ is the exponential of minus the gradient of a c-concave function $\Phi$. A function is c-concave iff $(\Phi^c)^c=\Phi$, where 
\[
\Phi^c(x)=\inf_{y\in M} [c(x,y)-\Phi(y)] .
\]
The function $\Phi$, also called potential, arises from the dual formulation of the transportation problem. In the dual formulation one tries to maximise 
\[
 \int \Phi d\mu + \int \Phi^c d\nu.
\]

In the case of $\mu$-a.e. equality, $\Phi$ induces the optimal transportation map $T$.

We consider the special case of $\mu(\cdot)= vol(\cdot)/vol(M)$, the normalised volume measure, and $\nu(\cdot)=\sum_{i\geq 0} \lambda_i \delta_{x_i}(\cdot)$, with $\lambda_i\geq 0$, $\sum \lambda_i=1$ and $x_i\in M$ for all $i$, a (possibly infinite) convex combination of Dirac measures, and the cost function $c_p(x,y)=\frac{1}{p} d^p(x,y)$ for $1\leq p< \infty$, where $d(x,y)$ denotes the geodesic distance between $x$ and $y$. Then, the optimal transport map can be written down explicitly, i.e. the c-concave function $\Phi$ can be written down explicitly (the apparent $p$-dependence is suppressed in the notation for the optimal transport map):
\begin{equation}
 \Phi(z)=\max_{1\leq i\leq n} \Phi_i(z), \quad \text{with} \quad \Phi_i(z)= - \frac{d^p}{p}(x_i,z) + b_i, \quad b_i \in \mathbb{R}\quad\forall i.
\end{equation}
and the optimal transportation map is given by (for $\Phi(z)=\Phi_i(z)$, see \cite{mccann01polar})
\[
 T(z)= \exp\left(\frac{d(z,x_i)}{\abs{\nabla \Phi_i(z)}}\nabla \Phi_i(z)\right)
\]

In particular, this shows that there is always a solution to the (even very much generalised) tessellation problem.

We are interested in the geometry of the mass being allocated to a certain Dirac point. More specific, how does the set $T^{-1}(x_i)$, the cell allocated to $x_i$, look like? Does it have to be connected?\\
A nice way to look at these sets is to interpret them as intersections of ``halfspaces'', i.e.:
\begin{eqnarray*}
 T^{-1}(x_i) &=& \{ q\in M : \Phi_i(q)> \Phi_j(q) \forall j\neq i\} \\
&=& \bigcap_{j\neq i} \{q\in M : \Phi_i(q)>\Phi_j(q)\} \\
&=:& \bigcap_{j\neq i} H^i_j.
\end{eqnarray*}
Thus, a good way to study the geometry of the cells $T^{-1}(x_i)$ is to look for properties invariant under sections of these ``halfspaces''. In this way we can reduce the problem from a many point problem to a two point problem. 

This also directly settles the question of the regularity of the boundary of the cells $T^{-1}(x_i)$ in the case of finitely many Dirac points. To study this regularity, it suffices to study the regularity of the boundary of the halfspaces $\partial H^i_j=\{p\in M: \Phi_i(q)=\Phi_j(q)\}$. This question is answered by an implicit function theorem for subdifferentiable functions, e.g. like in the second appendix to chapter 10 in \cite{villani2-2009}, and yields
\begin{lemma}
$\partial H^i_j$ is (locally) a $(n-1)$- dimensional Lipschitz graph. For all $q\in \partial H^i_j$ with $q\notin \{cutlocus(x_i)\}\cup\{cutlocus(x_j)\}$ there is a neighbourhood $U$ of $q$ such that $U\cap \partial H^i_j$ is the $(n-1)$ dimensional graph of a function which is as smooth as the cost function.
\end{lemma}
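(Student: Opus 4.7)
The plan is to realize $\partial H^i_j = F^{-1}(0)$ for $F := \Phi_i - \Phi_j$ and to deduce the graph structure from an implicit function theorem at the regularity level appropriate to $F$. I would split the argument into a classical smooth case (off the cut loci, for the second assertion) and a general Lipschitz case (using a non-smooth IFT, for the first assertion).

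For the smooth case, fix $q_0 \notin \mathrm{cut}(x_i) \cup \mathrm{cut}(x_j)$. Then the inverse exponentials $\exp_q^{-1}(x_i)$ and $\exp_q^{-1}(x_j)$ depend on $q$ as smoothly as $c_p$ in a neighborhood of $q_0$, so each $\Phi_k$ is classically differentiable there with
\[
\nabla \Phi_k(q) = d(x_k,q)^{p-2}\,\exp_q^{-1}(x_k).
\]
The crucial observation is that $\nabla \Phi_i(q_0) = \nabla \Phi_j(q_0)$ forces $x_i = x_j$: equating magnitudes gives $d(x_i,q_0)^{p-1}=d(x_j,q_0)^{p-1}$ and hence $d(x_i,q_0)=d(x_j,q_0)$, after which the identity reduces to $\exp_{q_0}^{-1}(x_i)=\exp_{q_0}^{-1}(x_j)$, so $x_i=x_j$ by injectivity of $\exp_{q_0}$ off the cut locus. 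Hence $\nabla F(q_0) \neq 0$ and the classical implicit function theorem produces the required $(n-1)$-graph with the regularity inherited from $c_p$.

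For the Lipschitz case at a general $q_0 \in \partial H^i_j$, I would invoke the implicit function theorem for subdifferentiable functions cited in Villani's Appendix~2 to Chapter~10. Each $\Phi_k$ is locally semi-concave, so its superdifferential $\partial^+\Phi_k(q_0) \subset T_{q_0}M$ is a non-empty compact convex set; concretely it is the convex hull of the vectors $d(x_k,q_0)^{p-2}v$ where $v$ ranges over minimizing initial velocities from $q_0$ to $x_k$. The hypothesis needed to apply the non-smooth IFT is that $\partial^+\Phi_i(q_0)$ and $\partial^+\Phi_j(q_0)$ are disjoint (equivalently, $0 \notin \partial^+\Phi_i(q_0) - \partial^+\Phi_j(q_0)$), which reduces to the same magnitude-and-direction comparison as in the smooth step, applied to any pair of representative vectors coming from possibly multiple minimizing geodesics. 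The main obstacle is upgrading this pointwise separation of superdifferentials into the quantitative, neighborhood-uniform non-degeneracy condition that the non-smooth IFT actually demands; this is handled by combining the upper semi-continuity of $q \mapsto \partial^+\Phi_k(q)$ with the compactness of the two disjoint convex sets at $q_0$ to obtain a uniform separation on a sufficiently small neighborhood, after which the theorem furnishes a coordinate direction $e$ and a neighborhood $U$ in which $F^{-1}(0) \cap U$ is the Lipschitz graph of a function of the remaining $n-1$ coordinates.
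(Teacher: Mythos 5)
The paper offers no proof of this lemma at all beyond the pointer to Villani's implicit function theorem for semiconcave functions, so what your proposal really does is attempt to verify the hypotheses of that theorem; the question is whether that verification is sound. Your smooth step is fine for $p>1$, but the non-smooth step has a genuine gap. Disjointness of $\partial^+\Phi_i(q_0)$ and $\partial^+\Phi_j(q_0)$ does \emph{not} reduce to ``the same magnitude-and-direction comparison applied to any pair of representative vectors'': these sets are convex hulls, and the convex hulls of two sets can intersect even when every generator of one differs from every generator of the other (both hulls may contain $0$, for instance). Worse, the separation you need can genuinely fail on the cut loci. Take $M=S^n$ and $q_0=x_i'$, the antipode of $x_i$: every unit vector at $q_0$ is the initial velocity of a minimizing geodesic to $x_i$, so $\partial^+\Phi_i(q_0)$ is the full closed ball of radius $\pi^{p-1}$ in $T_{q_0}M$, which contains $\partial^+\Phi_j(q_0)$ (a subset of the ball of radius $d(x_j,q_0)^{p-1}\le\pi^{p-1}$). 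No upper-semicontinuity or uniform-separation device can rescue this, because the conclusion itself fails there: if $\Phi_i(x_i')=\Phi_j(x_i')$, $x_j\ne x_i$ and $p>1$, then for $q$ near $x_i'$ one has $\Phi_i(q)-\Phi_j(q)\ge\bigl(\pi^{p-1}-d(x_j,x_i')^{p-1}\bigr)\,d(x_i',q)+o\bigl(d(x_i',q)\bigr)>0$ for $q\ne x_i'$, so $x_i'$ is an \emph{isolated} point of $\partial H^i_j$, not a point of an $(n-1)$-dimensional Lipschitz graph. Any correct argument (and, strictly speaking, the lemma itself) must exclude or treat separately the points where the two superdifferentials overlap; since these lie in the cut loci, the second assertion and the measure-zero consequence the paper actually uses survive, but the first assertion is not true verbatim at every point of $\partial H^i_j$, and your proof cannot close at such points.

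A secondary issue: in the smooth case the step ``$d(x_i,q_0)^{p-1}=d(x_j,q_0)^{p-1}$, hence $d(x_i,q_0)=d(x_j,q_0)$'' is vacuous for $p=1$, which the paper explicitly includes (and devotes Section 4 to). For $p=1$ both gradients are unit vectors, and they coincide exactly when $x_i$, $x_j$ and $q_0$ lie on a common minimizing geodesic with $q_0$ at one end; by the triangle inequality such critical points satisfy $\Phi_i-\Phi_j=b_i-b_j\mp d(x_i,x_j)$, so they meet the zero level set only in the degenerate case $|b_i-b_j|=d(x_i,x_j)$ --- in which case $\partial H^i_j$ actually contains a geodesic ray and is again not an $(n-1)$-graph. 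So the $p=1$ case needs this separate triangle-inequality argument, together with the same kind of caveat as above.
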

Thus, the boundary of the set $T^{-1}(x_i)$ is, as an intersection of sets with locally Lipschitz boundary, itself a set with locally Lipschitz boundary up to the points of intersection with the different ``halfspaces''.
In particular, $\partial H^i_j$ is a $\mu$-null set. Considering just finitely many Dirac points, we can thus define the cells $T^{-1}(x_i)$ to be open.
In the case of infinitely many Dirac points, things become less regular. For example, take a dense countable subset of the unit disc as Dirac points. As the different cells are starlike, see section \ref{general manifolds}, they become stars consisting of many lines.\\

We now turn to the question of the geometry of the cells, and especially the connectedness. First, we will study the case of compact subsets of $\mathbb{R}^n$. Here we distinguish between convex and just simply connected subsets because they behave drastically different. From the point of view of optimal transportation it would be more natural to consider c-convex sets instead of convex sets. However, from the point of view of tessellations the convex setting is more natural. Thus, we will stick to convex and just simply connected subsets of $\R^d$. In section \ref{general manifolds} we deal with the case $p=1$ which can be treated for all Riemannian manifolds yielding an Johnson Mehl diagram. This is in contrast to the case of quadratic cost as mentioned earlier.

\section{Euclidean case} \label{Euclidean case}
\subsection{Convex subsets of $\mathbb{R}^n$}

In this section, we will look at compact simply connected subsets $\Omega$ of $\mathbb{R}^n$. Let us assume until explicitly said otherwise that $\Omega$ is convex. In the case $p=2$ and finitely many Dirac points it was shown in section 4 of \cite{sturm-2009} and also in \cite{142747} that the cells $T^{-1}(x_i)$ are convex polytopes and therefore simply connected. This can be easily seen from the characterisation of this set as an intersection of halfspaces. Indeed, $\partial H^i_j$ is just a straight line. Thus, both $H^i_j$ and $H^j_i$ are convex polytopes. As intersections of convex sets are convex sets, $\bigcap_{j\neq i} H^i_j$ is still a convex polytope and we are done. However, this is the only case where we actually have connected sets. For all other cases of $p$, the sets $T^{-1}(x_i)$ do not have to be connected! Indeed, we have the following
\begin{figure}\label{kleinesp}
\begin{minipage}{13cm}
\begin{minipage}{6cm}
 \includegraphics[scale=0.21]{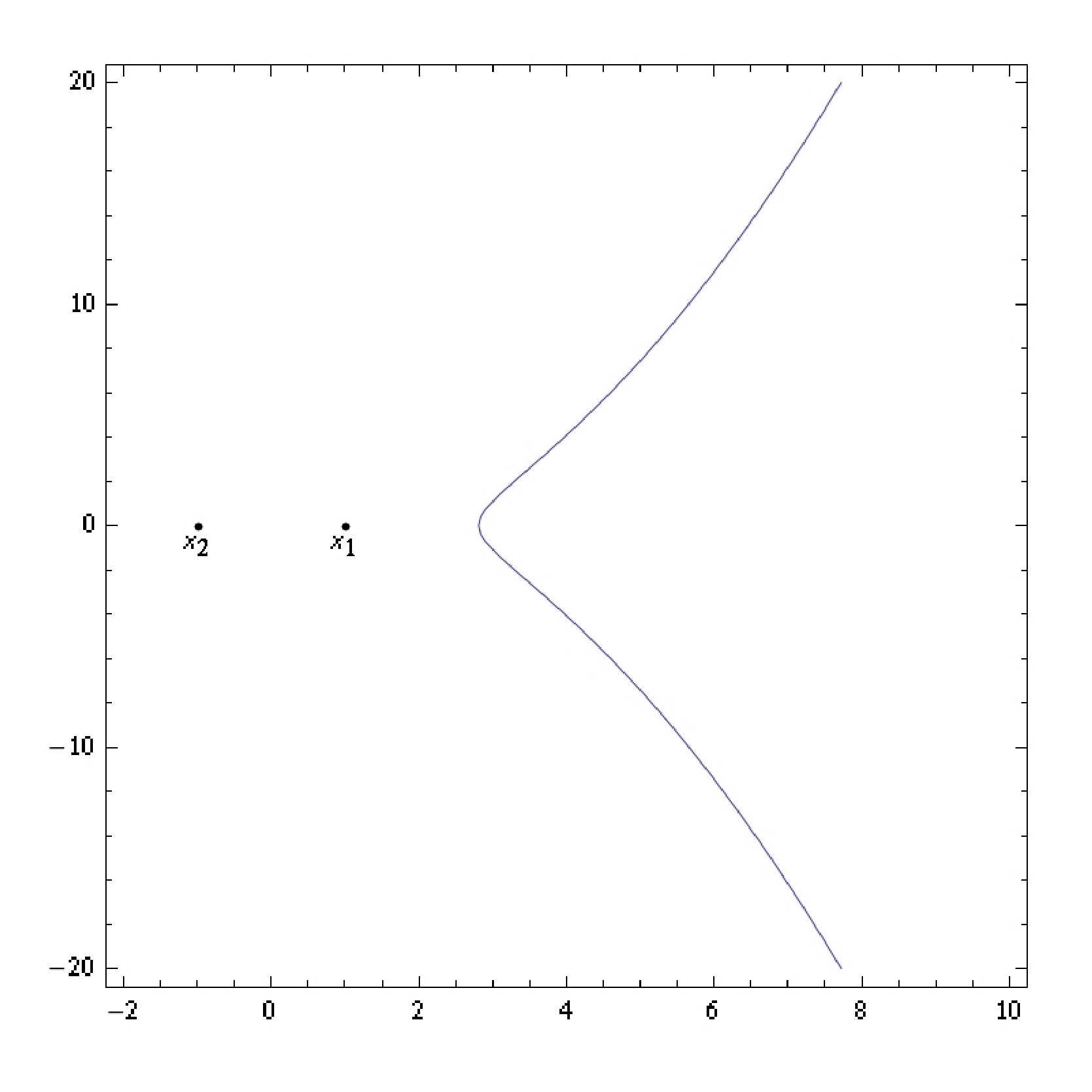}
\end{minipage}
$\dashrightarrow$
\begin{minipage}{6cm}
 \includegraphics[scale=0.21]{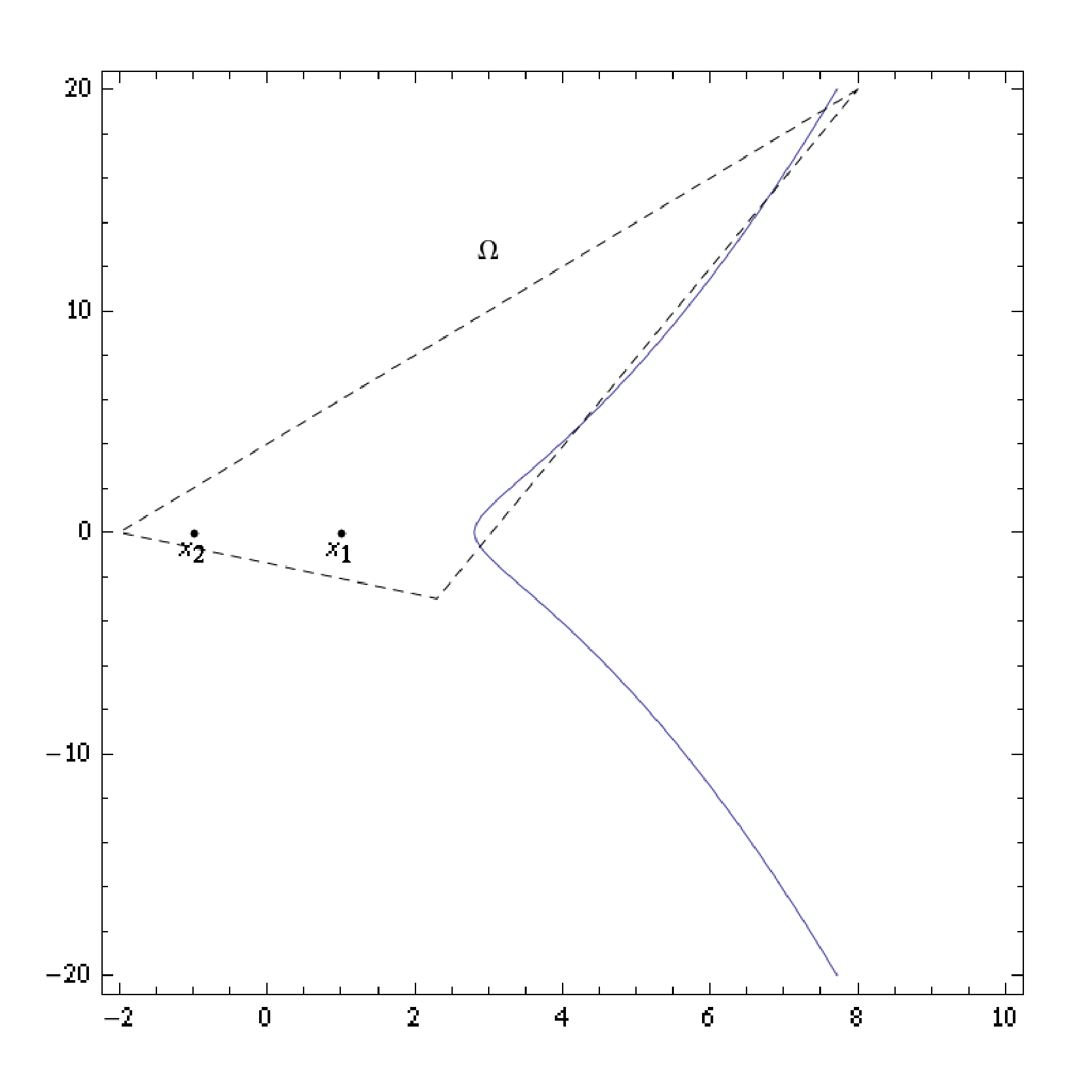}
\end{minipage}
\end{minipage} 
\caption{$p=3/2$, $\alpha=5$}
\end{figure}

\begin{figure}\label{grossesp}
\begin{minipage}{13cm}
\begin{minipage}[c]{6cm}
 \includegraphics[scale=0.21]{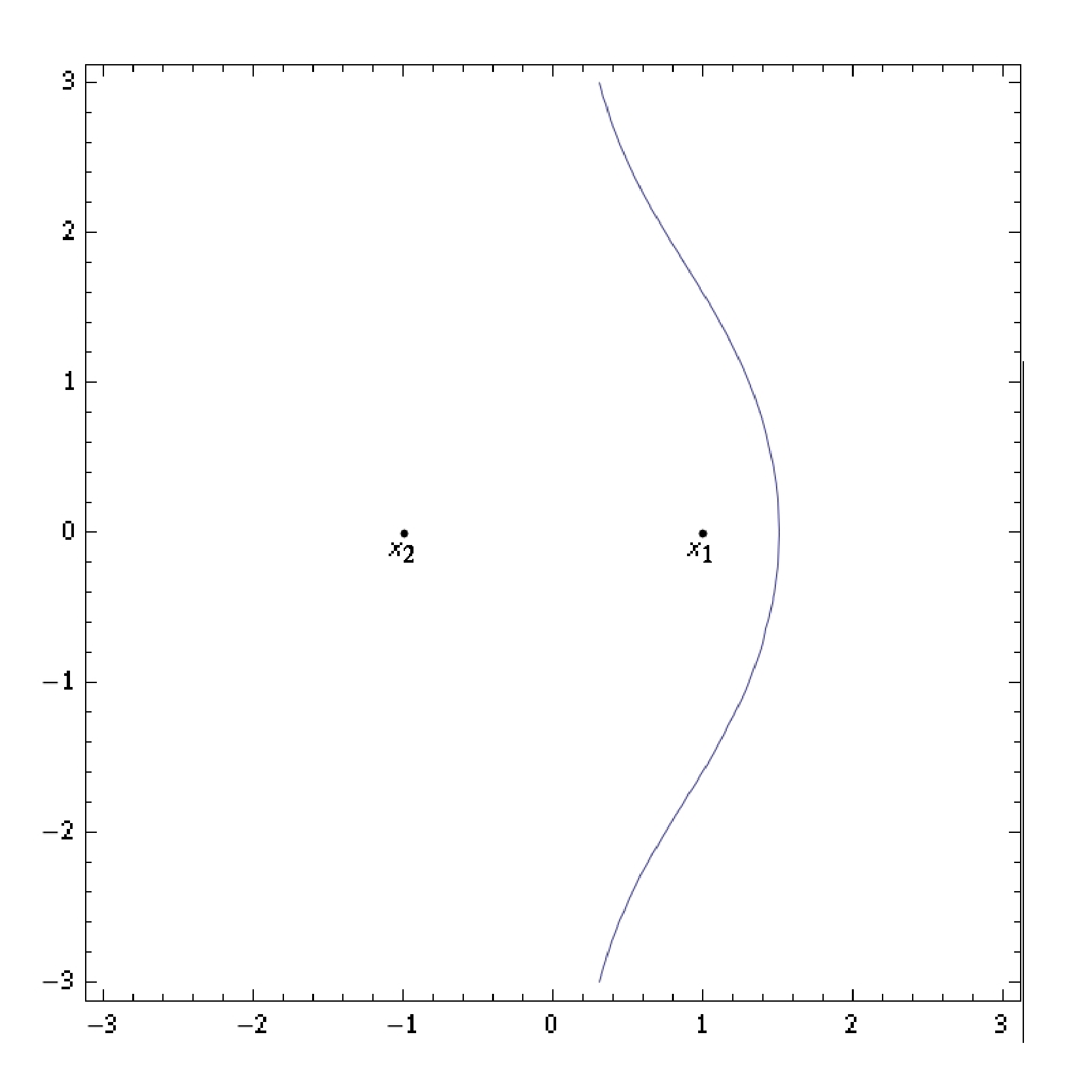}
\end{minipage}
$\dashrightarrow$
\begin{minipage}[c]{6cm}
 \includegraphics[scale=0.21]{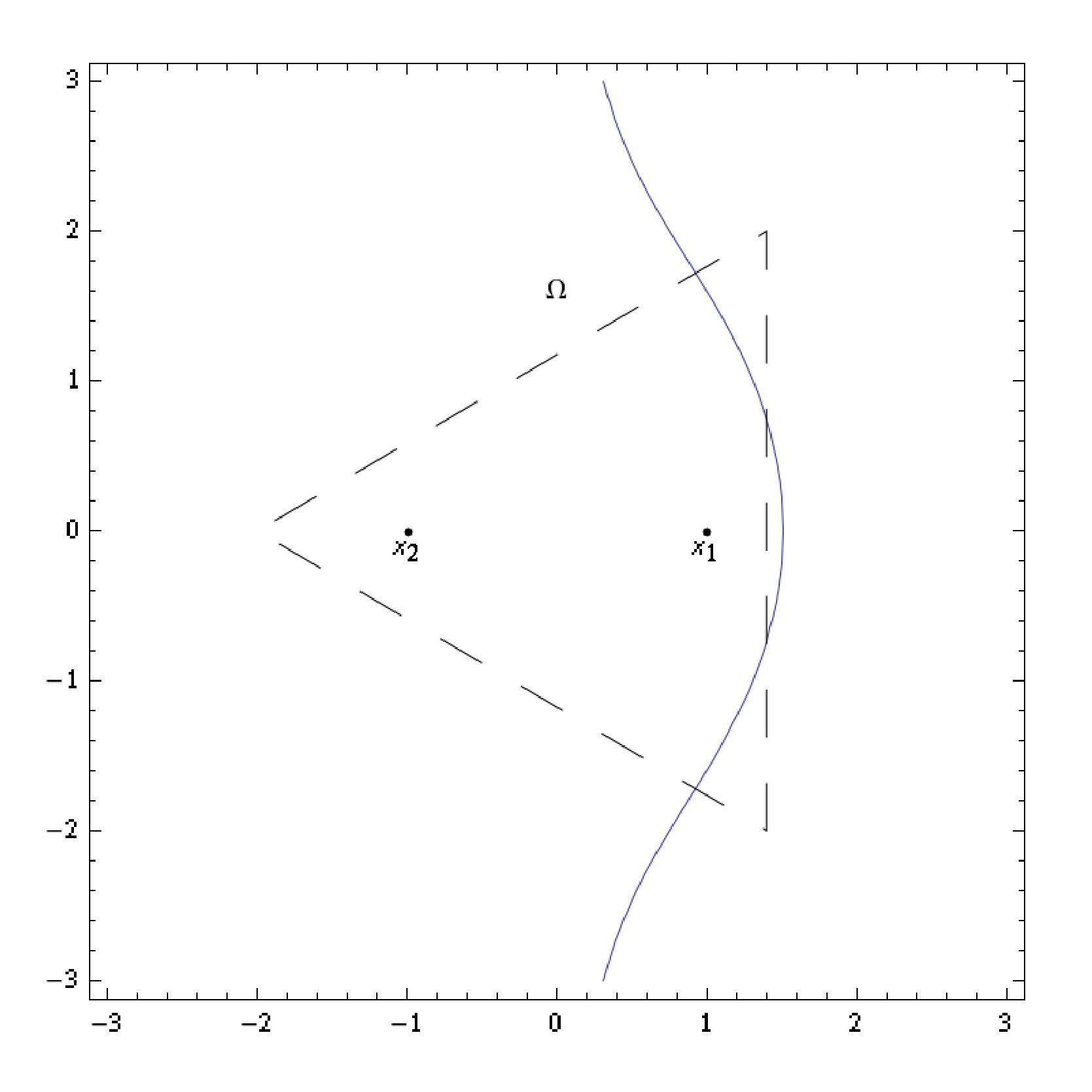}
\end{minipage}
\end{minipage}
\caption{$p=5$, $\alpha=100$}
\end{figure}

\begin{theorem}\label{euklid}
Let $n\geq 2$, $p\in (1,\infty)\backslash \{2\}$. Then, there is $\Omega\subset\mathbb{R}^n$ convex and compact, $x_1,x_2\in\Omega$, and $0<\lambda<1$ such that $T^{-1}(x_1)$ is not connected. $T$ is the optimal transportation map for $\mu$ and $\nu=\lambda \delta_{x_1}+(1-\lambda)\delta_{x_2}$ and cost function $c_p(x,y)$.
\end{theorem}
\begin{proof}
As the measure $\nu$ is a convex combination of just two Dirac-measures, we can safely work in $\mathbb{R}^2$. We will take $x_1=(1,0)$ and $x_2=(-1,0)$. In order to characterise the set $H^1_2$ one needs to study its boundary $\partial H^1_2$. However, the boundary is just some level set $\alpha$ of the function
\[
 m(x,y,p)=|x_1-(x,y)|^p-|x_2-(x,y)|^p=((x+1)^2-y^2)^{p/2}-((x-1)^2+y^2)^{p/2},
\]
where $(x,y)$ denotes some point in $\mathbb{R}^2$. Figure 1 and figure 2 show how these level sets look like for certain values of $p$ and $\alpha$. The figures show the typical behaviour of $m$ for $p<2$ and for $p>2$. They also show, why theorem \ref{euklid} holds.\\

In the case of $p<2$ (figure 1) we can draw a straight line from a point in $H^2_1$, that is on the left hand side of the level set, to a point in $H^1_2$ on the right hand side of the level set which intersects the level sets three times. Completing this line to a convex polygon $\Omega$ such that $x_1$ and $x_2$ are inside the polygon, we find a convex set $\Omega$ such that $H^1_2$ is not connected. To show that this is indeed possible it suffices to show that the graph of the level set has a minimum in $y=0$ and a change in sign in its second derivative for some $y>0$.\\

The case $p>2$ is slightly easier. The graph of the level set has a maximum at $y=0$. For $\alpha$ large enough, $x_1\in H^2_1$. Thus, we can take a segment of the hyperplane $x=c$ for some $c>1$ such that $(c,0)\in H^2_1$, which intersects with the graph of the level set two times, and complete this segment to a polygon analogously to the case above. For this, it suffices to show that the graph of the level set has a maximum in $y=0$.\\

By the implicit function theorem, if $\frac{\partial m}{\partial x}(x_0,y_0)\neq 0 $ then there is a function $\eta(y)$ such that in a neighbourhood of $(x_0,y_0)$ we have $m(\eta(y),y,p)=\alpha$. This condition holds for all $x>1$, the case we are interested in, and $1<p<\infty$. Moreover, we have that
\[
 \frac{\partial \eta}{\partial y}=-\frac{\partial m/\partial y}{\partial m/\partial x}(\eta(y),y)
\]
and for the second derivative of $\eta$ we get
\begin{eqnarray}
 \frac{\partial^2 \eta}{\partial y^2}=\frac{1}{\partial m /\partial x} \left[-\frac{\partial^2 m}{\partial x^2}\left(\frac{\partial \eta}{\partial y}\right)^2+2\frac{\partial^2 m}{\partial y\partial x}\frac{\partial \eta}{\partial y}-\frac{\partial^2 m}{\partial y^2}\right]
\end{eqnarray}
This means, that even though we do not know how $\eta$ explicitly looks like, we do know what its derivative at any point in $\mathbb{R}^2$ with $x>1$ is. And this is enough for our purpose. We can check, if $\eta$ has an extremum at $y=0$ and if we know that for any fixed $x>1$ the second derivative of $\eta$ changes its sign, we are done. Let us first check the extremal points of $\eta$.\\

Calculating the first derivative of $\eta$ yields:

 \[
\frac{p y \left((-1+x)^2+y^2\right)^{-1+\frac{p}{2}}-p y \left((1+x)^2+y^2\right)^{-1+\frac{p}{2}}}{-p (-1+x) \left((-1+x)^2+y^2\right)^{-1+\frac{p}{2}}+p
(1+x) \left((1+x)^2+y^2\right)^{-1+\frac{p}{2}}},
\]
which is zero at $y=0$. The second derivative at the point $(x,0)$ is:
\[
\frac{ \left((-1+x)^2\right)^{-1+\frac{p}{2}}- \left((1+x)^2\right)^{-1+\frac{p}{2}}}{- (-1+x) \left((-1+x)^2\right)^{-1+\frac{p}{2}}+ (1+x)
\left((1+x)^2\right)^{-1+\frac{p}{2}}}
\]
As we consider the case $x>1, p>1$ the denominator is strictly positive. As the numerator is negative for $p>2$ and positive for $p<2$, $\eta$ has at the point $(x,0)$ a maximum for $p>2$ and a minimum for $p<2$.\\
This settles the case for $p>2$. For $p<2$ it remains to show that the second derivative of $\eta$ has a change in sign for fixed $x$. The second derivative at the point $(x,y)$ is

\begin{eqnarray*}
&&\frac{p}{-p (-1+x)
\left((-1+x)^2+y^2\right)^{\frac{1}{2} (-2+p)}+p (1+x) \left((1+x)^2+y^2\right)^{\frac{1}{2} (-2+p)}}\times\\
 &\times& \Big[(-2+p) y^2 \left((-1+x)^2+y^2\right)^{\frac{1}{2} (-4+p)}+\left((-1+x)^2+y^2\right)^{\frac{1}{2} (-2+p)}\\
&-&(-2+p) y^2 \left((1+x)^2+y^2\right)^{\frac{1}{2}
(-4+p)}-\left((1+x)^2+y^2\right)^{\frac{1}{2} (-2+p)}\\
&+&\frac{p^2 y^2}{\left(p (-1+x) \left((-1+x)^2+y^2\right)^{\frac{1}{2}
(-2+p)}-p (1+x) \left((1+x)^2+y^2\right)^{\frac{1}{2} (-2+p)}\right)^2}\times\\
&\times& \Big(\left((-1+x)^2+y^2\right)^{\frac{1}{2} (-2+p)}-\left((1+x)^2+y^2\right)^{\frac{1}{2}
(-2+p)}\Big)^2 \times\\
&\times&\Big\{(-2+p) (-1+x)^2 \left((-1+x)^2+y^2\right)^{\frac{1}{2} (-4+p)}+\left((-1+x)^2+y^2\right)^{\frac{1}{2} (-2+p)}\\
&-&(-2+p) (1+x)^2
\left((1+x)^2+y^2\right)^{\frac{1}{2} (-4+p)}-\left((1+x)^2+y^2\right)^{\frac{1}{2} (-2+p)}\Big\}\\
&+&\frac{2 (-2+p) p y^2}{-p (-1+x) \left((-1+x)^2+y^2\right)^{\frac{1}{2} (-2+p)}+p (1+x) \left((1+x)^2+y^2\right)^{\frac{1}{2} (-2+p)}}\times\\
&\times& \left(-(-1+x) \left((-1+x)^2+y^2\right)^{\frac{1}{2}
(-4+p)}+(1+x) \left((1+x)^2+y^2\right)^{\frac{1}{2} (-4+p)}\right)\times\\
&\times& \left(-\left((-1+x)^2+y^2\right)^{\frac{1}{2} (-2+p)}+\left((1+x)^2+y^2\right)^{\frac{1}{2}
(-2+p)}\right)\Big]
\end{eqnarray*}

For $x>1$ the first fraction is positive. Thus, we can concentrate on the sum of the three terms inside the square brackets. The common denominator of these terms is a square. As we are interested in the sign, it suffices to look at the numerator after the expansion to the common denominator. This yields three terms, say $u,v$ and $w$. As we have already seen that the second derivative of $\eta$ is positive at $(x,0)$ (for $1<p<2$), it suffices to show that the numerator converges to zero from below as $y$ tends to infinity to be able to conclude with the mean value theorem that there is a change in sign (a null with change of sign). \\

The first term reads
\begin{eqnarray*}
u&=&\Big[(-2+p) y^2 \left((-1+x)^2+y^2\right)^{\frac{1}{2} (-4+p)}-(-2+p) y^2 \left((1+x)^2+y^2\right)^{\frac{1}{2}
(-4+p)}\\
&+&\left((-1+x)^2+y^2\right)^{\frac{1}{2} (-2+p)}-\left((1+x)^2+y^2\right)^{\frac{1}{2} (-2+p)}\Big]\times\\
&\times& \left(p (-1+x) \left((-1+x)^2+y^2\right)^{\frac{1}{2} (-2+p)}-p (1+x) \left((1+x)^2+y^2\right)^{\frac{1}{2}
(-2+p)}\right)^2
\end{eqnarray*}
By using functions like $g(z)=(x+1)^{\frac{p-4}{2}}$, we find the asymptotic $u\sim 8p^2(p-2)(3-p)xy^{3p-8}$. The second term reads
\begin{eqnarray*}
 v&=& p^2 y^2 \left(\left((-1+x)^2+y^2\right)^{\frac{1}{2} (-2+p)}-\left((1+x)^2+y^2\right)^{\frac{1}{2} (-2+p)}\right)^2\times\\
&\times& \Big[(p-2) (x-1)^2 \left((x-1)^2+y^2\right)^{\frac{1}{2}
(p-4)}-(p-2) (1+x)^2 \left((1+x)^2+y^2\right)^{\frac{1}{2} (p-4)}\\
&+&\left((-1+x)^2+y^2\right)^{\frac{1}{2} (-2+p)}-\left((1+x)^2+y^2\right)^{\frac{1}{2}(-2+p)}\Big],
\end{eqnarray*}
which behaves like $v\sim p^2(p-2)^3x^3 24 y^{3p-10}$. The last term is
\begin{eqnarray*}
w&=& 2 (-2+p) p y^2  \left(-\left((-1+x)^2+y^2\right)^{\frac{1}{2}
(-2+p)}+\left((1+x)^2+y^2\right)^{\frac{1}{2} (-2+p)}\right)\times\\
&\times&\left((1-x) \left((-1+x)^2+y^2\right)^{\frac{1}{2} (-4+p)}+(1+x) \left((1+x)^2+y^2\right)^{\frac{1}{2} (-4+p)}\right)\times \\
&\times& \left(-p (-1+x) \left((-1+x)^2+y^2\right)^{\frac{1}{2} (-2+p)}+p (1+x) \left((1+x)^2+y^2\right)^{\frac{1}{2}
(-2+p)}\right)
\end{eqnarray*}
This behaves asymptotically like $w\sim 16 p^2 (p-2)^2xy^{3p-8}$. In total, we have an asymptotic for $y$ tending to infinity like
\[
 u+v+w\sim 8(p-1)(p-2)p^2xy^{3p-8}\sim -y^{3p-8},
\]
as we assumed $1<p<2$. Thus, there is a change in sign of the second derivative of $\eta$ for any fixed $x>1$ proving the theorem.
\end{proof}

\subsection{Simply connected subsets of $\mathbb{R}^n$}
If we now replace the restriction of $\Omega$ being convex with the assumption of $\Omega$ being simply connected we are even worse off. Then, it can happen that $T^{-1}(x_i)$ has even infinitely many components. 
\begin{example}\label{infty}
Take two points in $\mathbb{R}^2$, say $x_1$ and $x_2$ from above, and for simplicity $c(x,y)=\frac{d^2}{2}(x,y)$ (this also works for other $p>1$). Arrange the constants $b_i$ such that $T(x_1)=x_2$. By induction, we now build  a set $\Omega$ such that $T^{-1}(x_1)$ has infinitely many components. The basic idea is as follows. $\partial H^1_2$ is a straight line. Everything on the left is transported to $x_2$ everything on the right to $x_1$. Just cut out a set with infinitely many components on the right hand side of $\partial H^1_2$:\\
Cut a nice set $\Omega_0$ out of $H^2_1$ such that some segment of the straight line $\partial H^2_1$ is a connected subset of the boundary of $\Omega_0$, say $G=\partial H^2_1\cap \partial\Omega_0$. Then, take a nice subset $W_1$ of $H^1_2$ such that $\partial W_1\cap \partial H^1_2 = g_1\subset G$. Set $\Omega_1=\Omega_0 \cup W_1$. \\
Let $W_2$ be a version of $W_1$ scaled down by some factor $\kappa/2$ and translated such that $g_2=\partial W_2\cap \partial H^1_2 \subset G$ and $\inf_{x\in W_2,y\in W_1} d(x,y)\geq \kappa/2$. Then set $\Omega_2=\Omega_1\cup W_2$.\\
Let $\Omega_n$ be constructed and let $W_{n+1}$ be a version of $W_n$ scaled down by the factor $\kappa/2$ and translated such that $g_{n+1}=\partial W_{n+1}\cap \partial H^1_2 \subset G$, $\inf_{x\in W_{n+1},y\in W_n} d(x,y)\geq (\kappa/2)^n$ and $\inf_{x\in W_{n+1},y\in W_j} d(x,y)\geq \sum_{k=j}^n2(\kappa/2)^k$. Then set $\Omega_{n+1}=\Omega_n\cup W_{n+1}$.\\
Following this procedure inductively, we set $\Omega=\Omega_\infty$ which will look like figure 3

\begin{figure}
 \includegraphics[scale=0.2]{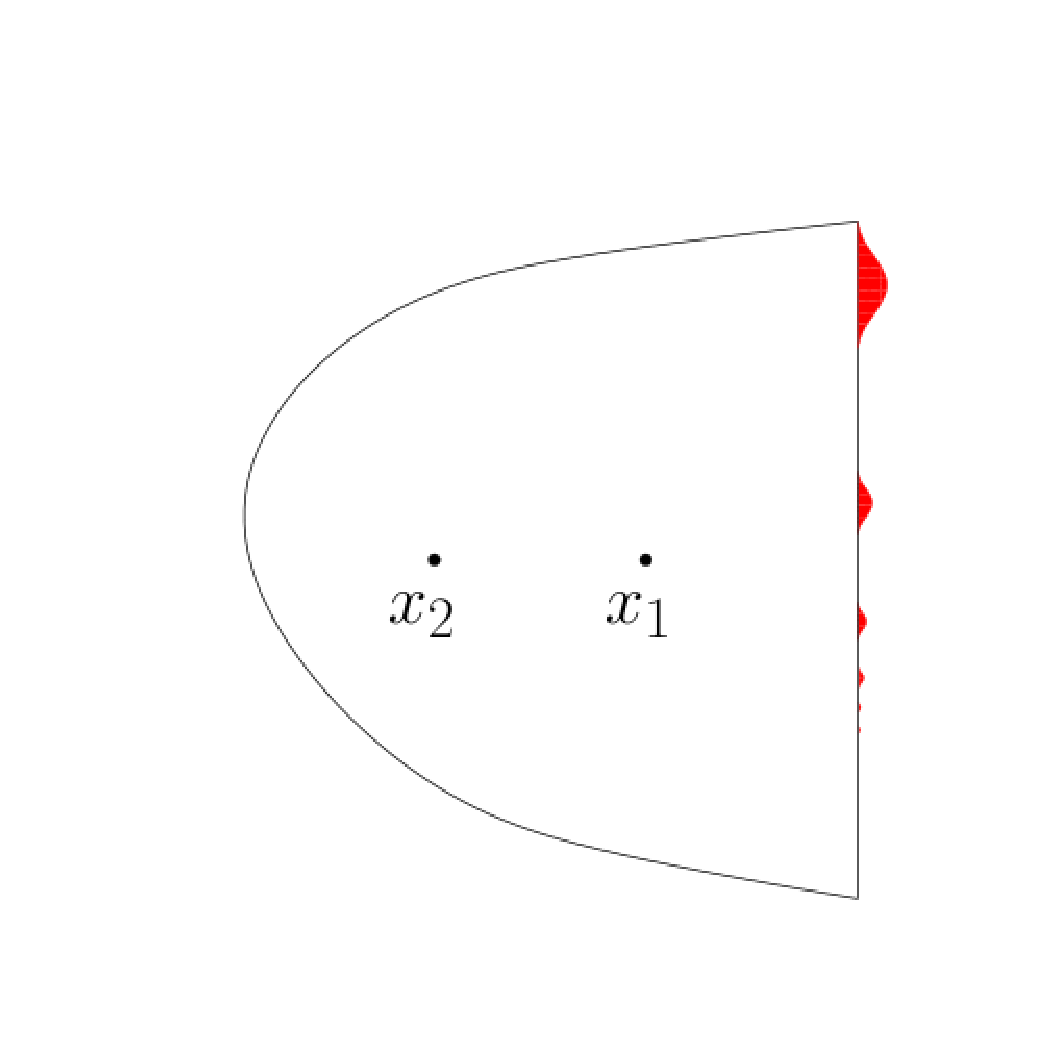}
\caption{Example \ref{infty}}
\end{figure}

The red area is transported to $x_1$, disconnected and has infinitely many components. 
\end{example}

\section{General manifolds and ``$p=1$''}\label{general manifolds}
We now want to consider the problem on a general complete compact Riemannian manifold. For the case $p=1$ the situation is very nice for all manifolds and we get a good geometric description of the cells of the Johnson Mehl diagram. Due to the triangle inequality, we have the following
\begin{theorem}
Let $M$ be a compact connected complete Riemannian manifold without boundary with $dim(M)\geq 2$. Consider the optimal transport problem from $\mu$ to $\nu$ with cost function $c_1(x,y)=d(x,y)$, the geodesic distance. Then, for any $i$ with $\lambda_i>0$ the set $T^{-1}(x_i)$ is simply connected and starlike with respect to $x_i$.
\end{theorem}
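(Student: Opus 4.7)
The case $p=1$ collapses to a distance comparison: since $c_1(x,y)=d(x,y)$, each potential reads $\Phi_i(z)=-d(x_i,z)+b_i$, so the defining condition of $T^{-1}(x_i)=\bigcap_{j\neq i}H^i_j$ becomes
\[
 d(x_j,z)-d(x_i,z)\;>\;b_j-b_i\qquad\forall\, j\neq i.
\]
The whole plan rests on the observation that the left-hand side is monotone non-decreasing along any minimizing geodesic issued from $x_i$, which is nothing but the triangle inequality. Starlikeness is immediate from this monotonicity, and simple connectedness is then obtained by a standard contraction along geodesics.

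For the starlikeness step, I would fix $z\in T^{-1}(x_i)$ and a unit-speed minimizing geodesic $\gamma\colon[0,d(x_i,z)]\to M$ from $x_i$ to $z$. For an intermediate point $w=\gamma(s)$ one has $d(x_i,w)+d(w,z)=d(x_i,z)$, and the triangle inequality $d(x_j,z)\leq d(x_j,w)+d(w,z)$ yields
\[
 d(x_j,w)-d(x_i,w)\;\geq\;d(x_j,z)-d(x_i,z)\;>\;b_j-b_i.
\]
This holds for every $j\neq i$, so $w\in H^i_j$ for all such $j$ and hence $w\in T^{-1}(x_i)$. Thus the whole geodesic lies in $T^{-1}(x_i)\cup\{x_i\}$, which is the starlikeness claim.

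For simple connectedness I would lift the picture to $T_{x_i}M$. Let
\[
 \tilde C:=\bigl\{v\in T_{x_i}M : t\mapsto\exp_{x_i}(tv)\text{ is minimizing on }[0,1]\text{ with endpoint in }T^{-1}(x_i)\cup\{x_i\}\bigr\}.
\]
By the previous step $\tilde C$ is linearly star-shaped around $0$, so the Euclidean contraction $(v,t)\mapsto(1-t)v$ composed with $\exp_{x_i}$ produces a continuous deformation of $T^{-1}(x_i)\cup\{x_i\}$ onto $\{x_i\}$ inside $M$. The only place where care is required — the main obstacle — is the cut locus of $x_i$, where $\exp_{x_i}$ is not injective and a continuous single-valued inverse may fail. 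This is resolved by observing that, thanks to the starlikeness step, every minimizing preimage of a cut point $q\in T^{-1}(x_i)$ lies in $\tilde C$, so any (e.g.\ Borel-measurable) choice of initial direction yields a contraction whose values in $M$ depend only on $q$ and the time $t$ via $\exp_{x_i}$, and upper semicontinuity of the multivalued map $q\mapsto\exp_{x_i}^{-1}(q)\cap\tilde C$ then suffices to contract loops — which is all simple connectedness requires.
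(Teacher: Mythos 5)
Your starlikeness argument is correct and is essentially the paper's own proof: the paper likewise takes $w\in T^{-1}(x_i)$ and a point $q$ with $d(x_i,w)=d(x_i,q)+d(q,w)$, and applies the triangle inequality to conclude $\Phi_i(q)>\Phi_j(q)$ for all $j\neq i$; your chain $d(x_j,w)-d(x_i,w)\geq d(x_j,z)-d(x_i,z)>b_j-b_i$ is the same computation. (The paper inserts a preliminary step showing $T(x_i)=x_i$; your version absorbs it, since the inequality at $s=0$ already gives $x_i\in H^i_j$ for every $j$ once $T^{-1}(x_i)\neq\emptyset$.) Up to and including connectedness the two proofs coincide.

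The gap is in the simple-connectedness step. The deformation you describe --- the linear contraction of the star-shaped set $\tilde C\subset T_{x_i}M$ pushed down by $\exp_{x_i}$ --- would, if continuous, exhibit $T^{-1}(x_i)\cup\{x_i\}$ as contractible. The obstruction at the cut locus is not a technicality that upper semicontinuity of $q\mapsto\exp_{x_i}^{-1}(q)\cap\tilde C$ can absorb: to contract a loop you must lift it \emph{continuously} to $\tilde C$, and a continuous single-valued selection from this multivalued map need not exist across the cut locus. In fact no repair is possible in the stated generality, because a geodesically starlike set need not be simply connected. On the flat torus $M=\mathbb{R}^2/\mathbb{Z}^2$ with $x_1=(0,0)$, $x_2=(1/2,0)$ and equal weights (so $b_1=b_2$), one computes $T^{-1}(x_1)=H^1_2=\{(x,y):|x|<1/4\}$, an open annulus: it is starlike with respect to $x_1$ and connected, but has nontrivial fundamental group. (Similarly $T^{-1}(x_i)=M$ when $\nu$ is a single Dirac mass.) So the sentence ``upper semicontinuity then suffices to contract loops'' asserts something false. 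To be fair, the paper's own proof has the same lacuna --- it establishes only starlikeness and connectedness and tacitly identifies ``starlike'' with ``simply connected'' --- but your additional machinery does not close the gap, and the torus example shows it cannot be closed without extra hypotheses (for instance, that the cell avoids the cut locus of $x_i$, in which case $\exp_{x_i}^{-1}$ is a genuine chart and your contraction works verbatim).
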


\begin{remark}
Note that in this case the transport map is unique even though we are working with the cost function $c_1(x,y)=d(x,y)$. The reason is that we transport the mass to a discrete measure. The ambiguous sets $\partial H^i_j=\{x\in M: \Phi_i(x)=\Phi_j(x)\}$ are null sets. Thus, their (countable) union is a null set and for any $z\in M\backslash \bigcup_{i,j}\partial H^i_j$ there is a unique transport plan.
\end{remark}

\begin{proof}
We claim that the halfspace $H^i_j$ is starlike with respect to $x_i$. Then, if $z\in  T^{-1}(x_i)=\bigcap_{j\neq i} H^i_j$ we have $z\in H^i_j$ for all $j$. As $H^i_j$ is starlike with respect to $x_i$, the geodesic $\gamma_t$ connecting $z$ and $x_i$ lies entirely inside $H^i_j$. And this holds for all $j$. Thus, $\{\gamma_t \}\subset \bigcap_{j\neq i} H^i_j$ and $T^{-1}(x_i)$ is not only connected but also starlike with respect to $x_i$. Hence, it suffices to prove the claim. \\
Assume $x_i\in\partial H^i_j$ and w.l.o.g. $b_i=0$. Then, we have
\[
 \Phi_i(x_i)=0=\Phi_j(x_i) \Rightarrow b_j=d(x_j,x_i).
\]
The set $N=\{z\in M: d(x_j,z)=d(x_j,x_i)+d(x_i,z)\}$ is a $\mu$-null set. For all $z\notin N$ we have
\[
 \Phi_j(z)=-d(x_j,z)+b_j>-d(x_j,x_i)+b_j-d(x_i,z)=\Phi_i(z)
\]
This implies that $T^{-1}(x_i)=\emptyset$ contradicting the assumption of $\lambda_i>0$. Thus, $x_i\notin\partial H^i_j$

Assume $T(x_i)\neq x_i$. Then, there is a $j\neq i$ such that $T(x_i)=x_j$, i.e. $\Phi_j(x_i)=-d(x_i,x_j)+b_j>b_i=\Phi_i(x_i)$. Then, we have for any $q\in M$
\[
 -d(q,x_j)+b_j\geq -d(q,x_i)-d(x_i,x_j)+b_j>-d(q,x_i)+b_i.
\]
This implies, that $T^{-1}(x_i)=\emptyset$ contradicting the assumption of $\lambda_i>0$. Thus, $T(x_i)=x_i$. 

Take any $w\in T^{-1}(x_i)$ (hence, $\Phi_i(w)>\Phi_j(w)$ for all $j\neq i$) and $q\in M$ such that $d(x_i,w)=d(x_i,q)+d(q,w)$, i.e. $q$ lies on the minimising geodesic from $x_i$ to $w$. Then, we have for any $j\neq i$ by using the triangle inequality once more
\begin{eqnarray*}
 -d(q,x_i)+b_i&=&-d(x_i,w)+d(q,w)+b_i\\
&\geq& -d(x_i,w)+b_i+d(w,x_j)-d(q,x_j)\\
&>&-d(q,x_j)+b_j,
\end{eqnarray*}
which means that $\Phi_i(q)>\Phi_j(q)$ for all $j\neq i$. Hence, $q\in \bigcap_{j\neq i} H^i_j$ proving the claim.
\end{proof}

However, as mentioned above, already for the case $p=2$ on a compact subset of the hyperbolic space the cells $T^{-1}(x_i)$ can be disconnected (see Example 3.8 in \cite{kim-2007}). Results of various simulations hint at this not being special for $p=2$. We believe, that for the hyperbolic space, this holds for all $p>1$. In the case of a compact manifold with positive and negative curvature, we can be even worse off.

\begin{example}\label{kreiswiggle}
Let $p>1$ be given. Consider two points on the sphere which are the antipodes of each other, say $x$ and $x'$. Fix the transport map $T=\exp (\nabla \Phi)$ with $\Phi=\max\{\Phi_1,\Phi_2\}$ and $\Phi_1(y)=-\frac{1}{p} d^p(x,y)+b_1$ and $\Phi_2(y)=-\frac{1}{p} d^p(x',y)$, as above. Assume, that $b_1$ is such that $\Phi_1(x')=\Phi_2(x')$. In particular, we have $T(S^n)=x$. We now want to deform the sphere into a manifold $M$ with positive and negative curvature while fixing the transport map $T$ (the metric of the manifold will change and therefore we will end up with another distance function, but we will suppress that in the notation).\\

Fix a geodesic connecting $x$ and $x'$ (that is one half of a great circle connecting $x$ and $x'$). The $x'$-equator divides the geodesic into two segments. One of the segments, say $G$, has the property that all of its points are closer to $x'$ than to $x$. Let us fix a point $g\in G$ not equal to $x'$. By adding a very thin spike to $S^n$ around $g$ and making it, if necessary, very long, we can achieve, that the top of the spike is transported to $x'$. The reason is, that the cost function $c(z,y)=\frac{1}{p}d^p(z,y)$ is strictly convex and the points on the spike are closer to $x'$ than to $x$ (take $\kappa$ arbitrary, $p>1$, then, there is $R$ such that for all $r>R$ we have $-(r+1)^p+\kappa < -(r-1)^p$). Then, in a similar manner to example \ref{infty} we can add a sequence of spikes not touching each other, getting smaller and smaller and converging to a ``zero-spike'' at $x'$ (see figure 4). In order that this yields infinitely many components of the cell $T^{-1}(x')$ we need the assumption $\Phi_1(x')=\Phi_2(x')$.
\begin{figure}
 \includegraphics[scale=0.2]{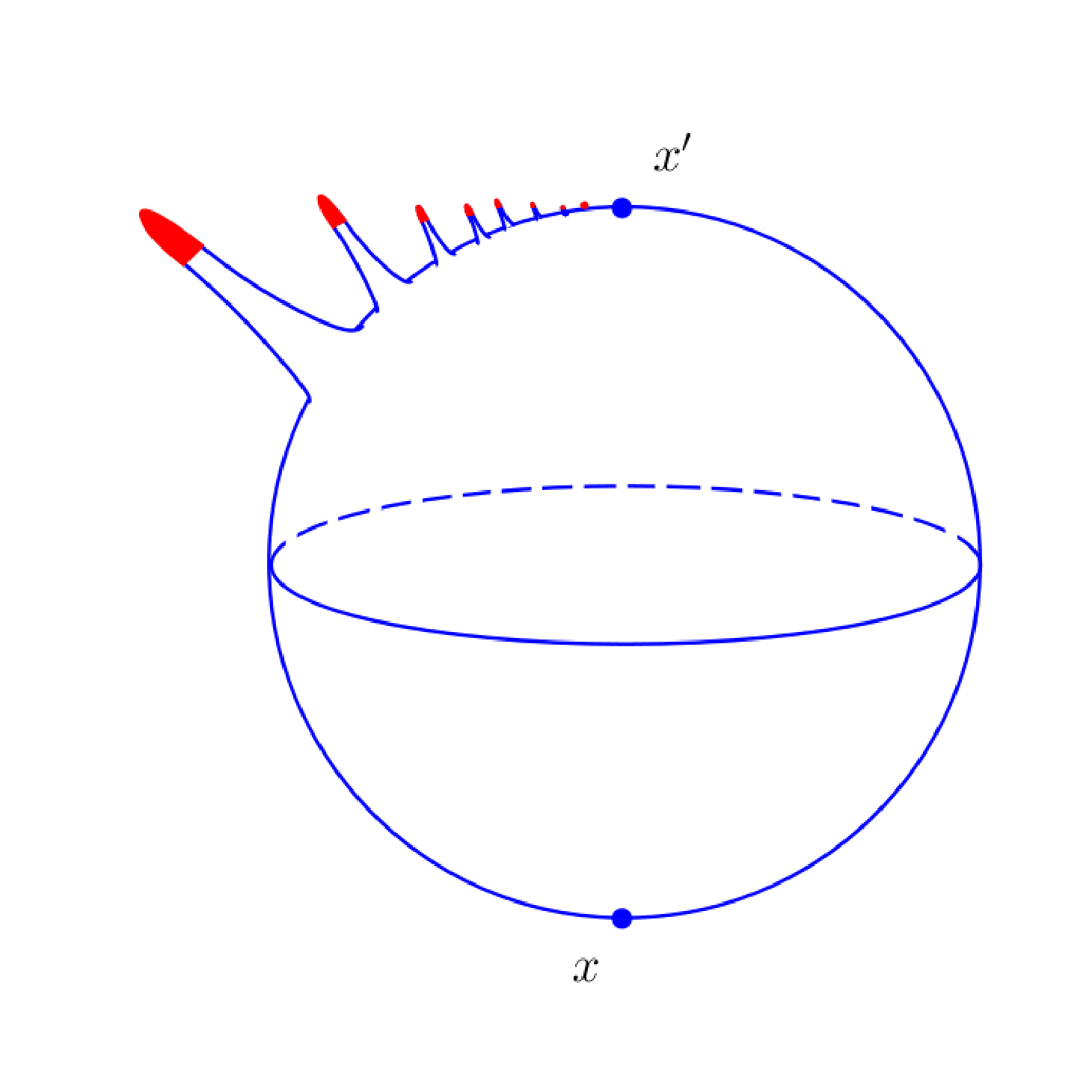}
\caption{Example \ref{kreiswiggle}}
\end{figure}

\end{example}

\end{document}